\DeclareMathAlphabet{\pazocal}{OMS}{zplm}{m}{n}
\numberwithin{equation}{section}
\newcommand{\R}{\mathbb{R}}
\newcommand{\Lt}{\mathcal{L}_t}
\newcommand{\Ls}{\mathcal{L}_s}
\newcommand{\tent}{\phi}
\newcommand{\normi}[1]{{\left\vert\kern-0.25ex\left\vert\kern-0.25ex\left\vert #1 
    \right\vert\kern-0.25ex\right\vert\kern-0.25ex\right\vert}}
\newcommand{\qand}{\quad\text{and}\quad}
\def\inte{\operatorname{int}}
\def\dist{\operatorname{dist}}
\def\id{\operatorname{id}}
\newtheorem{maintheorem}{Theorem}
\newcommand{\cmt}{\begin{maintheorem}}
\newcommand{\fmt}{\end{maintheorem}}
\newtheorem{maincorollary}[maintheorem]{Corollary}
\newcommand{\cmc}{\begin{maincorollary}}
\newcommand{\fmc}{\end{maincorollary}}
\newtheorem{lemma}{Lemma}[section]
\newtheorem{proposition}[lemma]{Proposition}
\theoremstyle{remark}
\newtheorem{remark}[lemma]{Remark}
\thanks{JFA and OE are partially supported by     CMUP 
(UIDB/00144/2020)
and PTDC/MAT-PUR/4048/2021, which are funded by FCT (Portugal) with national (MEC) and European structural funds through the programs COMPTE and FEDER, under the partnership agreement PT2020.
}
\keywords{Piecewise expanding maps, Metric entropy, Modulus of continuity}
\subjclass[2020]{37A05, 37A10, 37A35, 37C40, 37C75}
\begin{document}
\title[Modulus of continuity of invariant densities and entropies]{Modulus of continuity of invariant densities and entropies for piecewise expanding maps}
\date{}

\author[J. F. Alves]{Jos\'{e} F. Alves}
\address{Jos\'{e} F. Alves\\ Centro de Matem\'{a}tica da Universidade do Porto\\ Rua do Campo Alegre 687\\ 4169-007 Porto\\ Portugal.}
\email{jfalves@fc.up.pt} \urladdr{http://www.fc.up.pt/cmup/jfalves}

\author[O. Etubi]{Odaudu  Etubi}
\address{Odaudu  Etubi\\
Centro de Matem\'{a}tica da Universidade do Porto\\ Rua do Campo Alegre 687\\ 4169-007 Porto\\ Portugal.}
\email{etubiodaudu@gmail.com} 

\maketitle




\begin{abstract}
Using a  perturbation result  established by Galatolo and Lucena, we obtain quantitative estimates on the continuity of the invariant densities and entropies of the physical measures for some families of piecewise expanding maps. We apply these results to a family of two-dimensional tent maps. 
\end{abstract}


\tableofcontents

\section{Introduction}
In the theory of Dynamical Systems, it is common to encounter examples of systems with simple governing laws that exhibit highly complex and unpredictable behaviour. Prominent examples include one-dimensional quadratic maps, two-dimensional H\'enon quadratic diffeomorphisms, and the Lorenz system of quadratic differential equations in three-dimensional Euclidean space. Despite their straightforward formulation, these systems display intricate dynamical properties that have inspired significant mathematical developments over the past few decades.

One of the central areas of investigation in the theory of Dynamical Systems has been the study of invariant measures that describe the statistical behavior of these systems over time. Among these, \emph{Sinai-Ruelle-Bowen (SRB) measures}, also known as \emph{physical measures}, play a pivotal role. SRB measures are particularly important because they provide a way to understand the asymptotic distribution of orbits for a wide range of initial conditions, typically those that are of full measure with respect to the Lebesgue (volume) measure on the phase space. In other words, they allow us to describe the long-term statistical behavior of almost all initial points in a given region, making them essential tools for analysing chaotic systems where individual trajectories may be unpredictable, but the statistical distribution of orbits remains stable.


A key area of research in this field involves understanding the conditions under which SRB measures exist, as well as their robustness to perturbations in the system. The continuous dependence of these measures on the underlying dynamics is particularly significant, as it reflects the stability of the system's statistical properties in response to small changes. This is not only important from a theoretical standpoint but also has practical implications, as small perturbations often arise in real-world systems due to noise or external influences. In particular, the continuity of the \emph{metric entropy} -- a quantity that measures the complexity and unpredictability of the system -- associated with SRB measures is a subject of extensive study. Metric entropy quantifies the rate of information production in the system and is directly related to the degree of chaos present.

Many results in the literature have been devoted to the study of SRB measures and their associated metric entropies in various dynamical settings as  \cite{A00, ABV00, ADLP17, APP09, BC85, BC91, BY92, BY93a, B75, BR75, GB89, J81, LY73, LY85a, LY85b, P77, R76, S72}. These results range from classical systems like hyperbolic maps and diffeomorphisms to more complex systems involving non-uniformly hyperbolic dynamics or systems with piecewise smooth structures. Understanding how SRB measures and their entropies behave under perturbations provides valuable insights into the stability and predictability of chaotic systems, a topic that has been extensively explored in works such as \cite{ACF10, ACF10b, AOT06, AP21, APV17, AS14, AV02, BR19, F05, GL20, K82}. This ongoing research continues to shed light on the delicate interplay between deterministic chaos and statistical regularity, offering a deeper comprehension of the fundamental nature of chaotic dynamical systems.


In this work, we focus on \emph{absolutely continuous invariant probability measures} (\emph{ACIP}s), a class of measures that are absolutely continuous with respect to the Lebesgue measure, and often coincide with SRB measures in the context of non-uniformly hyperbolic systems. Specifically, we present estimates  on the modulus of continuity of the densities and metric entropies of ergodic ACIPs for certain classes of piecewise expanding maps in any finite dimension. These results extend the conclusions of previous works \cite{AP21,APV17}  and provide a deeper understanding of how the statistical properties of such systems behave under perturbations. To achieve this, we employ a result of Galatolo and Lucena in~\cite{GL20}, which enables us to establish   the modulus of continuity mentioned above.

Our primary application of these theoretical results is focused on a particular family of two-dimensional tent maps introduced  in \cite{PRT14}. This family is especially interesting because it is related to limit return maps that arise when a homoclinic tangency is unfolded by a family of three-dimensional diffeomorphisms, as discussed in \cite{PRT14,T01a}.
In previous works, the existence of ergodic ACIPs for these tent maps was established in \cite{PRT15}, and the continuity of the densities of these measures, along with their entropies, was demonstrated in \cite{AP21,APV17}. Building on these foundational results, we now strengthen the previous conclusions by showing that the densities and   metric entropies associated with these ACIPs perturbed by size $\delta$, has a modulus of continuity  of order $\delta \log \delta$.  
Our results are quite sharp for somewhat uniform families of piecewise expanding maps; see \cite{B07,BS08}.
We remark that  just H\"older continuity could also be deduced from Keller-Liverani results in \cite{KL99}.

The continuity of physical measures and their metric entropies plays a fundamental role in understanding the stability of dynamical systems, particularly those exhibiting complex or chaotic behavior.  By addressing the modulus of continuity of both physical measures and their metric entropies, our work aims to provide deeper insights into the intricate relationship between the dynamics of a system and the statistical properties of its invariant measures. Understanding this relationship is key to predicting the robustness of complex dynamical systems under perturbations, a question that has far-reaching implications in various fields, from mathematical theory to applied sciences. This research sheds light on the subtle and quantitative ways in which physical measures and their metric entropy responds  to changes in the underlying system, offering a more nuanced understanding of the stability and variability of chaotic systems. Through this investigation, we contribute to the broader goal of characterizing the resilience of dynamical systems to fluctuations, and thus advancing the overall theory of dynamical stability.

\subsection{Modulus of continuity}\label{se.pebd}
Here we present the general setting under which our main results will be obtained. Let $\Omega$ be a compact subset of~$\R^d$, for some $d\ge 1$. Consider   $m$ the \emph{Lebesgue}  measure on $\Omega$ and, for each $1\le p\le\infty$, the respective space $L^p(\Omega)$  endowed with its usual norm $\|\quad\|_p$.  Absolute continuity will be always meant with respect to~$m$. 
Let  $(\phi_t)_{t\in I}$    be a family of  transformations  $\phi_t: \Omega \to \Omega$, where $I$ is a metric space.
We assume that there exists $N\in\mathbb N\cup\{\infty\}$ and, for each $t\in I$, there exists an $m$ mod~0 partition $\{ R_{t,i}\}_{i=1}^N$ 
  of $\Omega$ such that each $R_{t,i} $  is a
closed domain with piecewise $C^2$ boundary of finite
$(d-1)$-dimensional measure. We also assume  that  
\begin{equation}\label{eq.maps}
\phi_{t,i}=\phi_t|_{R_{t,i}}
\end{equation}
 is a $C^2$ bijection from  $\inte(R_{t,i})$, the interior  of~$R_{t,i}$, onto its image, with a $C^2$ extension to the boundary of~$R_{t,i}$. Consider the \emph{Jacobian} function
 $$J_{t}=|\det (D\phi_t)|,$$
 defined on the (full Lebesgue measure)  subset of points in~$\Omega$ where $\phi_t$ is differentiable. 
Next we state some conditions  for our family of maps.
\begin{itemize}
\item[(P1)] there exists $\sigma_t>0$ such that for all $1\le i< N$ and all $x\in\inte(\phi_t(R_{t,i}))$
$$\| D\phi_{t,i}^{-1}(x)\|
\le \sigma_t.$$ 
 \end{itemize}
\begin{itemize}
 \item[(P2)] there
exists $\Delta_t\ge 0$ such that for all $1\le i< N$ and all $x,y\in\inte(R_{t,i})$ 
$$
\log \frac{J_{ t}(x)}{J_{ t}(y)}\le \Delta_t\,\|\phi_t(x)-\phi_t(y)\|.
$$
 \end{itemize}
\begin{enumerate}
 \item[(P3)]   there exist   $\alpha_t,\beta_t >0$ and, for each $1\le i< N$, there exists a $C^1$
unitary vector field~$X_{t,i}$ on $\partial \phi_t(R_{t,i})$\footnote{ At the   points $x\in\partial \phi_t(R_{t,i})$
where $\partial \phi_t(R_{t,i})$ is not smooth the vector $X_{t,i}(x)$ is a
common $C^1$ extension of $X_{t,i}$ restricted to each
$(d-1)$-dimensional smooth component of $\partial \phi_t(R_{t,i})$ having
$x$ in its boundary. The tangent space
at any such point   is the union of the tangent
spaces to the  $(d-1)$-dimensional smooth components that point 
belongs to.} such that:
\begin{enumerate}
\item[(a)]   the line segments joining  each $x\in\partial \phi_t(R_{t,i})$ to
$x+\alpha_t X_{t,i}(x)$ are pairwise disjoint, contained in $\phi_t(R_{t,i})$ and their union is a neighborhood of $\partial \phi_t(R_{t,i})$ in $\phi_t(R_{t,i})$;
\item[(b)] for each $x\in\partial \phi_t(R_{t,i})$
and $v\in T_x\partial \phi_t(R_{t,i})\setminus\{0\}$, we have $\left|\sin\angle (v,X_{t,i}(x))\right|\geq\beta_t$,
where $\angle (v,X_{t,i}(x))$ denotes the angle between $v$ and $X_{t,i}(x)$.
 \end{enumerate}
 \end{enumerate}
Under these conditions, it was established in \cite{A00} (see also \cite{GB89} for the case of a finite number of smoothness domains) that each $\phi_t$ has some ergodic absolutely continuous invariant probability measure. Assuming the uniqueness of this measure for each $t$, the continuity of the measure in relation to the parameter $t$ was also proven in \cite{APV17}, under the following uniformity condition: 
\begin{itemize}
\item[(U)]  there exists $\ell\ge1$ 
such that $\phi_t^j$ satisfies (P1)-(P3) for each $1\le j\le\ell$; moreover, 
there exist $0<\theta<1$ and  $M>0$ such that,
 for all $t\in I$ and $1\le j\le\ell$,
 \begin{equation*} 
\quad \quad\quad\sigma_{t,\ell}\left(1+\frac1{\beta_{t,\ell}}\right)\le \theta,\quad \sigma_{t,j}\left(1+\frac1{\beta_{t,j}}\right)\le M \qand 
\Delta_{t,j}+ \frac{1}{\alpha_{t,j}\beta_{t,j}} + 
\frac{\Delta_{t,j }}{\beta_{t,j}}   \le M,
\end{equation*}
  where $\sigma_{t,j}, \Delta_{t,j}, \alpha_{t,j},\beta_{t,j}$ are the constants in (P1)-(P3) for the map $\phi^j_t$.

\end{itemize}

To establish the modulus of continuity of these measures and their entropies, some additional conditions are required.  Set for each $s,t\in I$ and    $1\le i< N$
$$
K_{t,s,i}=\phi_{s,i}^{-1}(\phi_t(R_{t,i})\cap\phi_s(R_{s,i}))\qand \psi_{t,s,i}=\phi_{t,i}^{-1}\circ \phi_{s,i}|_{K_{t,s,i}}.$$
Naturally, we consider $\psi_{t,s,i}$ only when $K_{t,s,i} \neq \emptyset$. In fact, conditions (1)-(3) of Theorem~\ref{density} below  essentially mean that  the sets $\phi_t(R_{t,i})$ and $\phi_s(R_{s,i})$ are close to each other and  the maps $\phi_{t,i}$ and $\phi_{s,i}$ are also close to each other. 
Let $\id$ represent the identity map on $\mathbb{R}^d$, possibly restricted to some subset of $\mathbb{R}^d$.
 Define the difference set
 $$A=\{s-t: s,t\in I\}.
 $$
 Given a compact set $K\subset\mathbb R^d$ and a function   $\psi:K\to\mathbb R^d$,  let
 $$\|\psi\|_0=\sup_{x\in K}\|\psi(x)\|,$$
 where $\|\quad\|$ denotes the Euclidean norm in $\mathbb R^d$. In what follows we   assume $0\log0=0$.
 
 \begin{maintheorem}\label{density} 
Let  $(\phi_t)_{t\in I}$ be a family of   maps for which (U) holds  and    each $\phi_t $ has a unique ergodic absolutely continuous invariant probability measure $\mu_t$.
Assume that 
 there exists a function $\mathcal E:A\to\mathbb [0,1)$   such that, for all $s,t\in I$, 
  \begin{enumerate}
    \item  $\displaystyle \sum_{i=1}^Nm\left(\phi_{t,i}^{-1}\left(\phi_t(R_{t,i})\setminus\phi_s(R_{s,i})\right)\right)^{1/d}\le \mathcal E(t-s);$
    \item $\displaystyle\sum_{i=1}^N \|\psi_{t,s,i}-\id\|_0 \le \mathcal E(t-s)$; 
 \item $\displaystyle
 \displaystyle\sum_{i=1}^N\left|\frac{J_s}{J_t\circ \psi_{t,s,i}}-1\right|\le \mathcal E(t-s)$.
\end{enumerate}
%
%
 Then, there exist $C>0$ and $0<\eta<1$ such that, for all $s,t\in I$,
 $$\left\|\frac{d{\mu_t}}{dm}-\frac{d{\mu_{s}}}{dm}\right\|_1\le C\mathcal E(t-s)|\log \mathcal E(t-s)|.$$
 \end{maintheorem}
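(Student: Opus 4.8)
The plan is to realise each $\mu_t$ as the normalised fixed point of the Perron--Frobenius (transfer) operator of $\phi_t$ and to invoke the abstract perturbation theorem of Keller--Liverani \cite{KL99}, so that the entire content is pushed into a quantitative estimate of the distance between the transfer operators of $\phi_t$ and $\phi_s$. Denote by $\Lt$ the transfer operator
\[
\Lt f=\sum_{i=1}^N\left(\frac{f}{J_t}\circ\phi_{t,i}^{-1}\right)\mathbf 1_{\phi_t(R_{t,i})},
\]
acting on the bounded-variation–type Banach space $\mathcal B$ (with norm $\|\cdot\|_{\mathcal B}$) on which, by (P1)--(P3) and the uniformity condition (U), the family satisfies a Lasota--Yorke inequality
\[
\|\Lt^{\,n}f\|_{\mathcal B}\le C\theta^{\,n}\|f\|_{\mathcal B}+C\|f\|_1
\]
with constants uniform in $t\in I$, as in \cite{A00,APV17}. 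The density $h_t=d\mu_t/dm$ is the leading eigenfunction of $\Lt$ for the simple eigenvalue $1$, and uniqueness of $\mu_t$ provides a uniform spectral gap. These are exactly the standing hypotheses of \cite{KL99} for the pair of norms $(\|\cdot\|_{\mathcal B},\|\cdot\|_1)$, and its conclusion, after passing from spectral projections to the normalised fixed points, furnishes $C>0$ and $0<\eta<1$ with
\[
\left\|\frac{d\mu_t}{dm}-\frac{d\mu_s}{dm}\right\|_1\le C\,\normi{\Lt-\Ls}^{\,\eta},
\qquad
\normi{\Lt-\Ls}=\sup_{\|f\|_{\mathcal B}\le1}\|(\Lt-\Ls)f\|_1 .
\]
It then suffices to prove $\normi{\Lt-\Ls}\le C'\,\mathcal E(t-s)$, and the stated bound follows with the same $\eta$.

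To estimate the triple norm I would fix $f$ with $\|f\|_{\mathcal B}\le1$ and bound $\|(\Lt-\Ls)f\|_1$ branch by branch. For each $i$ I split $\Omega$ into the overlap $\phi_t(R_{t,i})\cap\phi_s(R_{s,i})$, whose pull-back under $\phi_{s,i}$ is $K_{t,s,i}$, and the region carrying only one image. On the overlap the change of variables $x=\phi_{s,i}(y)$, using $\phi_{t,i}^{-1}\circ\phi_{s,i}=\psi_{t,s,i}$, turns the $i$-th contribution into
\[
\int_{K_{t,s,i}}\left|\,f(\psi_{t,s,i}(y))\,\frac{J_s(y)}{J_t(\psi_{t,s,i}(y))}-f(y)\,\right|dy,
\]
which by the triangle inequality is at most
\[
\int_{K_{t,s,i}}|f\circ\psi_{t,s,i}|\left|\frac{J_s}{J_t\circ\psi_{t,s,i}}-1\right|dm+\int_{K_{t,s,i}}|f\circ\psi_{t,s,i}-f|\,dm,
\]
a Jacobian-mismatch term plus a displacement term. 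On the non-overlap region the same change of variables reduces the $i$-th contribution to $\int_{E_{t,s,i}}|f|\,dm$, where $E_{t,s,i}=\phi_{t,i}^{-1}(\phi_t(R_{t,i})\setminus\phi_s(R_{s,i}))$.

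Each piece is then matched to one hypothesis. For the non-overlap term I would use the Sobolev-type embedding of the bounded-variation space into $L^{d/(d-1)}$, giving $\int_{E_{t,s,i}}|f|\,dm\le C\|f\|_{\mathcal B}\,m(E_{t,s,i})^{1/d}$; summing in $i$ and factoring out $\|f\|_{\mathcal B}$ reproduces precisely the left-hand side of condition~(1), which explains the exponent $1/d$ appearing there. For the displacement term I would use the $L^1$ modulus of continuity of bounded-variation functions under the near-identity maps $\psi_{t,s,i}$, of the form $\int_{K_{t,s,i}}|f\circ\psi_{t,s,i}-f|\,dm\le C\|f\|_{\mathcal B}\,\|\psi_{t,s,i}-\id\|_0$, and sum via condition~(2). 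The Jacobian term is bounded using $\|f\|_1\le\|f\|_{\mathcal B}$ and summed via condition~(3). Collecting the three sums, with a single surviving factor $\|f\|_{\mathcal B}\le1$, yields $\normi{\Lt-\Ls}\le C'\mathcal E(t-s)$, after which the Keller--Liverani estimate closes the argument.

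The main obstacle will be this branch-wise control of $\normi{\Lt-\Ls}$, and within it two points demand care. First, the geometry of the mismatched domains: I must show that the $i$-th error over the symmetric difference of the images is genuinely governed by $m(E_{t,s,i})$ through the embedding, so that the summed estimate is exactly the quantity in~(1), and that this holds uniformly, including when $N=\infty$. Second, controlling $\int|f\circ\psi_{t,s,i}-f|$ for merely bounded-variation $f$ and summing over all branches so that only one factor of $\|f\|_{\mathcal B}$ remains; this is where the closeness of $\psi_{t,s,i}$ to $\id$ measured by $\|\cdot\|_0$ is used, and where the summability built into (2) and (3) is indispensable. Once these are secured, checking that the uniform Lasota--Yorke inequality and the spectral gap meet the precise assumptions of \cite{KL99} is routine, and the exponent $\eta$ is the one delivered by that theorem.
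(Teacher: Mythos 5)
Your proposal is correct and follows essentially the same route as the paper: a uniform Lasota--Yorke inequality on $BV(\Omega)$ from (U), the branch-wise splitting of $\|(\Lt-\Ls)f\|_1$ into overlap and symmetric-difference contributions matched to hypotheses (1)--(3) via Sobolev/H\"older and a BV displacement estimate, and then the Keller--Liverani perturbation theorem together with the simplicity of the eigenvalue $1$. The only differences are cosmetic: your triangle-inequality grouping places $f\circ\psi_{t,s,i}$ rather than $f$ in the Jacobian-mismatch term (requiring one extra change of variables, or the bound $|f\circ\psi|\le|f\circ\psi-f|+|f|$, to reduce it to $\|f\|_1$), and the displacement estimate $\int_K|f\circ\psi-f|\,dm\le C\|\psi-\id\|_0V(f)$ that you invoke is exactly the paper's Lemma~\ref{AV}, which it proves by approximating $f$ through piecewise affine and $C^1$ functions.
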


 The factor $1/d$ in assumption (1) is related to an application of  Sobolev and   H\"older inequalities in the proof of Proposition~\ref{Tnorm}. Notice that, in the case that $N$ is finite, we just need the bounds for the summands.

 Under the assumptions of Theorem~\ref{density}, using Keller-Liverani stability result, we may obtain a quantitative  estimate on the continuity of the resolvent operators of the transfer operators associated with our family of maps $(\phi_t)_{t\in I}$; see Remark~\ref{re.KL} below.

For each $t\in I$, let $h_{\mu_t}(\phi_t)$ denote the \emph{entropy} of the transformation $\phi_t$ with respect to the $\phi_t$-invariant   measure~$\mu_t$.

  \begin{maintheorem}\label{entropy} 
Let  $(\phi_t)_{t\in I}$ be a family of  
  maps  for which (U) holds  and  each $\phi_t$ has a unique absolutely continuous invariant probability measure $\mu_t$. 
Assume that 
\begin{enumerate}
\item there exists a function $\mathcal E: A\to \mathbb R^+$   such that, for all $s,t\in I$,
 $$\left\|\log {J_s}-\log {J_t}\right\|_d\le \mathcal E(t-s)\qand\left\|\frac{d{\mu_t}}{dm}-\frac{d{\mu_{s}}}{dm}\right\|_1\le \mathcal E(t-s);$$
 \item $h_{\mu_t}(\phi_t)=\int_\Omega \log J_t \,dm$, and
 there is $M>0$ such that
 $\|\log J_t\|_\infty \le M,$ for all $t\in I$.
 \end{enumerate}
 Then, there exists  some constant $C>0$ such that, for all $s,t\in I$,
 $$\left|{h_{\mu_t}(\phi_t)}-{h_{\mu_s}(\phi_s)}\right|\le C\mathcal E(t-s).$$
 \end{maintheorem}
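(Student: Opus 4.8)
The plan is to start from the Rokhlin-type entropy formula supplied by hypothesis~(2), which I write as $h_{\mu_t}(\phi_t)=\int_\Omega (\log J_t)\,\rho_t\,dm$, where $\rho_t=d\mu_t/dm$ denotes the invariant density. The difference of entropies then becomes a single integral, and the natural move is the ``add and subtract'' device: inserting the mixed term $\int_\Omega(\log J_s)\,\rho_t\,dm$ I would split
\begin{equation*}
h_{\mu_t}(\phi_t)-h_{\mu_s}(\phi_s)=\int_\Omega(\log J_t-\log J_s)\,\rho_t\,dm+\int_\Omega(\log J_s)\,(\rho_t-\rho_s)\,dm,
\end{equation*}
so that it suffices to bound each of the two integrals on the right by a constant multiple of $\mathcal E(t-s)$.

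The second integral is the easy one: using the uniform bound $\|\log J_t\|_\infty\le M$ from hypothesis~(2) and the $L^1$ closeness of densities from hypothesis~(1), one gets
\begin{equation*}
\left|\int_\Omega(\log J_s)\,(\rho_t-\rho_s)\,dm\right|\le \|\log J_s\|_\infty\,\|\rho_t-\rho_s\|_1\le M\,\mathcal E(t-s).
\end{equation*}
For the first integral I would apply H\"older's inequality with the conjugate exponents $d$ and $d/(d-1)$, which is precisely why the $L^d$ norm appears in hypothesis~(1):
\begin{equation*}
\left|\int_\Omega(\log J_t-\log J_s)\,\rho_t\,dm\right|\le \|\log J_t-\log J_s\|_d\,\|\rho_t\|_{d/(d-1)}\le \|\rho_t\|_{d/(d-1)}\,\mathcal E(t-s).
\end{equation*}

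The crux, and the only genuine obstacle, is to control $\|\rho_t\|_{d/(d-1)}$ uniformly in $t$. This is exactly where the uniformity hypothesis~(U) enters: it yields uniform Lasota--Yorke estimates, and hence a uniform bound on the invariant densities $\rho_t$ in the strong (bounded variation / quasi-H\"older) norm of the Banach space underlying the Keller--Liverani framework. Through the Sobolev-type embedding into $L^{d/(d-1)}$ already used in the proof of Proposition~\ref{Tnorm} --- the very reason the exponent $d$ appears in hypothesis~(1) --- this gives $\sup_{t\in I}\|\rho_t\|_{d/(d-1)}<\infty$; alternatively, from a uniform $L^\infty$ bound the interpolation inequality $\|\rho_t\|_{d/(d-1)}\le\|\rho_t\|_\infty^{1/d}\|\rho_t\|_1^{(d-1)/d}=\|\rho_t\|_\infty^{1/d}$ does the same job, since $\|\rho_t\|_1=1$. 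I would quote this uniform boundedness from the earlier part of the paper rather than re-derive it. Collecting the two estimates with $C=M+\sup_{t\in I}\|\rho_t\|_{d/(d-1)}$ yields $\left|h_{\mu_t}(\phi_t)-h_{\mu_s}(\phi_s)\right|\le C\,\mathcal E(t-s)$, as claimed. Note that, in contrast with Theorem~\ref{density}, no H\"older exponent is lost here, so the dependence is in fact Lipschitz in the modulus $\mathcal E$.
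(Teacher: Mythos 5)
Your proposal is correct and follows essentially the same route as the paper: the same add-and-subtract decomposition (up to the immaterial choice of which density appears in the mixed term), the $L^\infty$--$L^1$ bound for one piece, and H\"older with conjugate exponents $d$ and $d/(d-1)$ combined with the uniform $BV$ bound on the densities (from the uniform Lasota--Yorke inequality) and the Sobolev embedding for the other. The paper likewise reads the entropy formula as $\int\log J_t\,d\mu_t$ in its proof, so your interpretation of hypothesis~(2) matches the intended one.
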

 
Conditions for the validity of an  entropy formula as in assumption (2) of Theorem~\ref{entropy}   were obtained in \cite{AM23,AOT06,AP21}.

\subsection{Two-dimensional tent maps}\label{tent.ap} We apply the previous theorems to a  family of two-dimensional piecewise expanding   maps  introduced in~\cite{PRT14}. 
 Consider the triangle  $\Omega\subset \mathbb R^2$, which is the union of the two triangles
\begin{equation*}\label{algo}
R_1=\{(x_1,x_2):0 \leq x_1 \leq 1, \ 0 \leq x_2 \leq x_1 \}
\end{equation*}
and
\begin{equation*}
R_2=\{(x_1,x_2):1 \leq x_1 \leq 2, \ 0 \leq x_2 \leq 2-x_1 \}.
\end{equation*}
Consider the map  $\tent_{1}:\Omega\to \Omega$, given by
\begin{equation*}
\tent_1(x_1,x_2)= \left\{
\begin{array}{ll}
( x_1+x_2 , x_1-x_2 ), & \mbox{if }    (x_1,x_2)\in R_1;\\
( 2-x_1+x_2 , 2-x_1-x_2 ), & \mbox{if } (x_1,x_2)\in R_2.%
\end{array}
\right.
\end{equation*}
The \emph{tent maps} $\phi_t:\Omega\to \Omega$ are defined for      $0<t\leq 1$ by
   \begin{equation}\label{familyt2}
\phi_t=t\phi_1.
\end{equation}
Note that $R_1$ and $R_2$ are the smoothness domains of $\phi_t$, separated  by  the common straight line segment
$
\mathcal C=\{(x_1,x_2)\in \Omega: x_1=1\}.
$
These  tent maps can be described geometrically as follows: first the triangle $\Omega$ is folded through $\mathcal C$, making $R_2$ overlap $R_1$; then a flip of this domain is made and expanded to $\Omega$, thus obtaining   $\phi_1(\Omega)$; for the other maps $\phi_t$, we apply a final contraction by the factor $t$.
 \begin{figure}[h]
 \begin{center}
 \includegraphics[width=16cm]{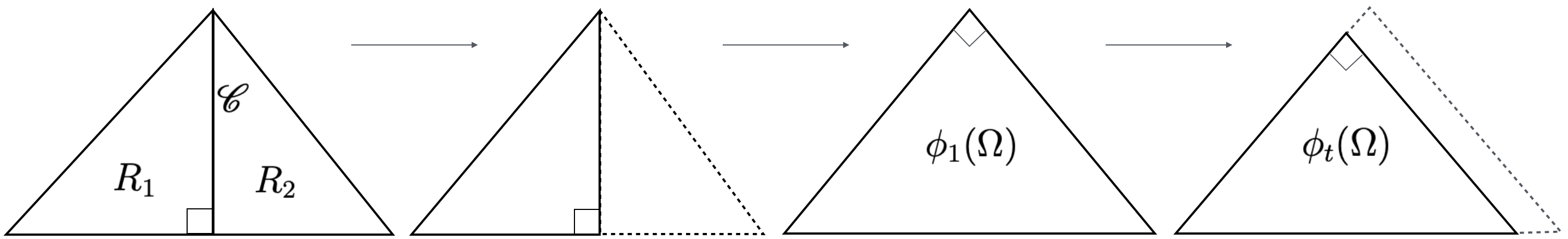}
 \caption{The tent maps}
 \end{center}
 \end{figure}

\noindent  
It was proved  in \cite{PRT15} that, for each $t \in [\tau,1]$, with  
\begin{equation}\label{eq.tau}
\tau=\frac{1}{\sqrt{2}}(\sqrt{2}+1)^{1/4},
\end{equation}
the map $\tent_t $ exhibits a \emph{strange attractor} in $ \Omega$, thereby extending the results obtained in~\cite{P06} only for $t=1$. The existence and uniqueness of an ergodic  absolutely continuous $\phi_t$-invariant probability measure $\mu_t$   was obtained in~\cite{PRT15}, for each $t \in [\tau,1]$. In the next result we improve the conclusions of~\cite{AP21,APV17}, on the continuity of the measures and their respective entropies for this family of maps. This   in particular implies that  these quantities vary   H\"older continuously.


\begin{maintheorem}\label{th.tent}
 There exists $C>0$ and $0 < \eta < 1$ such that, for all   $s,t\in [\tau,1]$,   
 $$\left\|\frac{d{\mu_t}}{dm}-\frac{d{\mu_{s}}}{dm}\right\|_1\le C|t-s|\cdot|\log |t-s||\qand \vert {h_{\mu_t}(\phi_t)}-{h_{\mu_s}(\phi_s)}\vert\le C\mathcal |t-s|\cdot|\log |t-s|| .$$
\end{maintheorem}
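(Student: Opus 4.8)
The plan is to deduce Theorem~\ref{th.tent} by verifying, for this explicit family, the hypotheses of the two general results Theorem~\ref{density} and Theorem~\ref{entropy}. Three features of the maps $\phi_t=t\phi_1$ of \eqref{familyt2} make the verification tractable. First, the smoothness domains $R_1,R_2$ are fixed, independent of $t$, so in all the quantities of Theorem~\ref{density} only the images $\phi_t(R_i)$ move, not the domains themselves. Second, each $\phi_{t,i}=\phi_t|_{R_i}$ is the restriction of a linear map, namely $t$ times the linear map $\phi_{1,i}$, whose determinant has modulus $2$; consequently the Jacobian $J_t=|\det D\phi_t|=2t^2$ is constant on $\Omega$. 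Third, one computes $m(\Omega)=1$ and $\phi_1(R_1)=\phi_1(R_2)=\phi_1(\Omega)=\Omega$, the triangle with vertices $(0,0),(2,0),(1,1)$, so that $\phi_t(R_i)=t\Omega$ for $i=1,2$. The uniformity hypothesis~(U), together with the existence and uniqueness of the ergodic ACIP $\mu_t$ for $t\in[\tau,1]$, has already been established for precisely this family in \cite{PRT15,APV17,AP21}, and I would simply invoke it; the constant $\tau$ in \eqref{eq.tau} is exactly what guarantees the required contraction $\sigma_{t,\ell}(1+1/\beta_{t,\ell})\le\theta<1$ after some iterate.

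Next I would check the three conditions of Theorem~\ref{density} by direct computation, using $d=2$ and the fact that each sum has only two terms. Since $\phi_{1,i}$ is linear, $\psi_{t,s,i}=\phi_{t,i}^{-1}\circ\phi_{s,i}$ reduces to the homothety $\psi_{t,s,i}(x)=(s/t)\,x$; as $\Omega$ is bounded and $t\ge\tau$, this yields $\|\psi_{t,s,i}-\id\|_0\le C|t-s|$, giving condition~(2). Because $J_s$ and $J_t$ are constant, $J_s/(J_t\circ\psi_{t,s,i})=s^2/t^2$, so $|s^2/t^2-1|\le C|t-s|$ gives condition~(3). For condition~(1), when $s\le t$ the set $\phi_t(R_i)\setminus\phi_s(R_i)=t\Omega\setminus s\Omega$ is an ``annular'' region of area $(t^2-s^2)m(\Omega)=t^2-s^2$ (and it is empty when $s>t$), and pulling back by the linear map $\phi_{t,i}^{-1}$, whose Jacobian is $1/(2t^2)$, gives
$$
m\!\left(\phi_{t,i}^{-1}\!\left(\phi_t(R_i)\setminus\phi_s(R_i)\right)\right)=\frac{t^2-s^2}{2t^2}\le C\,|t-s|.
$$
Taking the power $1/d=1/2$ and summing over $i$ produces the bound $C|t-s|^{1/2}$. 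Since the difference set satisfies $|t-s|\le 1-\tau<1$, the function $\mathcal E(t-s)=C|t-s|^{1/2}$ dominates all three right-hand sides simultaneously, and Theorem~\ref{density} then yields $\|d\mu_t/dm-d\mu_s/dm\|_1\le C|t-s|^{\eta/2}$ for some $0<\eta<1$.

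Finally, I would feed this into Theorem~\ref{entropy}. Its hypothesis~(1) requires an $L^d$ bound on $\log J_s-\log J_t$ together with the $L^1$ bound on the densities just obtained; since $\log J_t=\log(2t^2)$ is constant and $m(\Omega)=1$, one has $\|\log J_s-\log J_t\|_d=2|\log(s/t)|\le C|t-s|$, and combining with the density estimate one may take $\mathcal E(t-s)=C|t-s|^{\eta/2}$. Hypothesis~(2) asks for the entropy formula $h_{\mu_t}(\phi_t)=\int_\Omega\log J_t\,dm$ and a uniform bound $\|\log J_t\|_\infty\le M$; the former holds for this family by \cite{AP21} (and is in fact immediate here, as $\log J_t$ is constant, $m(\Omega)=1$ and $\mu_t$ is a probability measure), while $2t^2\in[2\tau^2,2]$ gives $\|\log J_t\|_\infty\le\log 2$. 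Theorem~\ref{entropy} then delivers $|h_{\mu_t}(\phi_t)-h_{\mu_s}(\phi_s)|\le C|t-s|^{\eta/2}$. Both estimates thus hold with the exponent $\eta/2$, which, after renaming, is the $\eta$ appearing in Theorem~\ref{th.tent}.

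The algebra above is routine; the step I expect to carry the real content is condition~(1) of Theorem~\ref{density}. Its measure estimate is genuinely Lipschitz in $t-s$, but the exponent $1/d$ forced by the Sobolev--H\"older argument behind Proposition~\ref{Tnorm} degrades it to the square-root order $|t-s|^{1/2}$, and this is precisely the source of the H\"older (rather than Lipschitz) continuity in the conclusion. The only other non-elementary input is the verification of~(U), which I would not reprove but cite from \cite{PRT15,APV17,AP21}.
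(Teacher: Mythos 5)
Your proposal is correct and follows essentially the same route as the paper: invoke (U), existence/uniqueness of $\mu_t$ and the entropy formula from \cite{PRT15,APV17,AP21}, verify conditions (1)--(3) of Theorem~\ref{density} and (1)--(2) of Theorem~\ref{entropy} by explicit computation with the piecewise affine structure ($J_t=2t^2$, $\phi_t(R_i)=t\Omega$), and track the exponent $1/d=1/2$ arising from condition~(1). The only slip is that $\phi_{1,2}$ is affine rather than linear, so $\psi_{t,s,2}-\id=\left(\frac{s}{t}-1\right)(x_1-2,x_2)$ is a homothety about $(2,0)$ rather than the origin; the bound $\|\psi_{t,s,2}-\id\|_0\le C|t-s|$ is unaffected.
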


Results by Baladi and Smania give that linear response   fails for one-dimensional tent maps under some transversality condition of the topological class; see \cite{B07,BS08}. However, a recent result by Bahsoun and  Galatolo shows that linear response holds if one replaces the critical point in the one-dimensional map by a singularity; see~\cite{BG24}. It would be interesting to check whether changing the dimension of the system has an effect on linear response or not. In particular, it would be interesting to check if linear response holds, or not, within the higher dimensional family of tent maps that we consider in this article.

As a consequence of Theorem~\ref{th.tent}, we get a H\"older continuity estimate for   the dependence on the parameters of the resolvents of the transfer operators associated with the family of tent maps $(\phi_t)_{t\in [\tau,1]}$; see Remark~\ref{re.KL} below.

\addtocontents{toc}{\protect\setcounter{tocdepth}{1}}

\subsection*{Acknowledgement}  The authors are grateful to Wael Bahsoun for several insightful discussions and careful reading of an early draft of this work. The authors also thank the anonymous referee for pointing out   the possibility of improving the conclusion in Theorem~\ref{density} using~\cite{GL20} instead of \cite{KL99}.

\addtocontents{toc}{\protect\setcounter{tocdepth}{2}}

 \section{Functions of bounded variation}\label{variation}

The main ingredient for the proof of the above theorems is the
notion of variation for functions in multidimensional spaces. We
adopt the definition presented  in \cite{G84}. Given $f\in
L^1(\R^d)$ with compact support,
 we define the {\it variation} of $f$ as
$$V(f)=\sup\left\{\int_{\R^d}f\mbox{div}(g)dm\,:\,g\in
C_0^1(\R^d,\R^d)\text{ and }  \|g\|\leq 1\right\},$$ where
$C_0^1(\R^d,\R^d)$ is the set of $C^1$ functions from
$\R^d$ to $\R^d$ with compact support, $\mbox{div}(g)$ is the divergence of $g$  and $\|\quad\|$ is
the sup norm in $C_0^1(\R^d,\R^d)$. Integration by parts gives that
if $f$ is a $C^1$ function with compact support, then 
\begin{equation}\label{varc1}
V(f)=\int_{\R^d}\|Df\|dm.
\end{equation}
%
%
%
%
We shall use the following  properties of bounded variation functions whose proofs 
 may be
found in \cite{G84}, respectively in Remark 2.14,  Theorem 1.17 and Theorem 1.28.
 \begin{itemize}
\item[(B1)] If $f\in BV(\mathbb R^d)$ is zero outside a compact domain $K$ whose boundary is Lipschitz continuous, $f|_K$ is continuous and $f|_{\inte (K)}$ is $C^1$, then
$$V(f)= \int_{\inte(K)} \|Df\| dm   + \int_{\partial K}|f| d \bar m ,
$$
where $\bar m $ denotes the $(d-1)$-dimensional measure on $\partial K$.
\item[(B2)]  Given $f\in BV(\mathbb R^d)$, there is a sequence $(f_n)_n$ of $C^\infty$
 maps such that
 $$
 \lim_{n\rightarrow\infty}\int|f-f_n|dm=0
 \qand
 \lim_{n\rightarrow\infty}\int\|Df_n\|dm=V(f).
 $$
 \item[(B3)] There is some constant $C>0$  such that, for any $f\in
BV(\mathbb R^d)$,
 \begin{equation}\label{bvp}
 \left(\int|f|^pdm\right)^{1/p}\leq C\,V(f),
\quad \mbox{with}\quad p=\frac{d}{d-1}.
\end{equation}
 \end{itemize}
 This last property is known as \emph{Sobolev Inequality}. Notice that     $p=d/(d-1)$ is the conjugate of $d\ge  1$, meaning that
  \begin{equation}\label{eq.conjugate}
\frac1p+\frac1d=1.
\end{equation}
In the next lemma we obtain a general fact about bounded variation functions that plays a key role in this work.
  
\begin{lemma}\label{AV}
If $K$ is a compact subset of $\mathbb R^d$ and $\psi:K\to\R^d$ is a diffeomorphism onto its image, then there exists  $C>0$ such that, for all $f\in BV(\mathbb R^d)$,
$$\int_K|f\circ\psi-f|dm\le C\|\psi-\id\|_0 V(f).
$$
\end{lemma}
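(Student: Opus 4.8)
The plan is to reduce to the case of a $C^1$ function and then bound the pointwise difference $f(\psi(x))-f(x)$ by a line integral of $\|Df\|$ along the segment joining $x$ to $\psi(x)$. For the reduction I would use (B2) to choose $C^\infty$ functions $f_n$ with $\int_{\R^d}|f-f_n|\,dm\to0$ and $\int_{\R^d}\|Df_n\|\,dm\to V(f)$. Both sides of the asserted inequality pass to the limit: since $\psi$ is a diffeomorphism of the compact set $K$ onto its image, $|\det D\psi^{-1}|$ is bounded on $\psi(K)$, so $\int_K|f_n\circ\psi-f\circ\psi|\,dm\le C'\int_{\R^d}|f_n-f|\,dm\to0$, and combined with $\int_K|f_n-f|\,dm\to0$ this gives $\int_K|f_n\circ\psi-f_n|\,dm\to\int_K|f\circ\psi-f|\,dm$, while $\|\psi-\id\|_0V(f_n)\to\|\psi-\id\|_0V(f)$. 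Thus it suffices to treat $f\in C^1$, for which \eqref{varc1} identifies $V(f)$ with $\int_{\R^d}\|Df\|\,dm$.

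For $C^1$ $f$, writing $\Phi_s(x)=(1-s)x+s\psi(x)$, the fundamental theorem of calculus along the segment $[x,\psi(x)]$ yields the pointwise bound
\[
|f(\psi(x))-f(x)|\le\int_0^1\|\psi(x)-x\|\,\|Df(\Phi_s(x))\|\,ds.
\]
Integrating over $K$ and applying Fubini, it then remains to show
\[
\int_0^1\!\!\int_K\|\psi(x)-x\|\,\|Df(\Phi_s(x))\|\,dm(x)\,ds\le C\,\|\psi-\id\|_0\,V(f),
\]
after which \eqref{varc1} gives the statement. The natural tool is the change of variables $y=\Phi_s(x)$ in each $s$-slice, exploiting that $\psi$, being a diffeomorphism on the compact set $K$, has uniformly bounded derivative and Jacobian.

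The main obstacle is that this change of variables is delicate: $\Phi_s$ need not be injective, and $D\Phi_s=(1-s)I+sD\psi$ can be singular for intermediate $s$ whenever $D\psi$ has a non-positive real eigenvalue (for instance when $\psi$ reverses orientation). In particular one cannot first bound $\|\psi(x)-x\|\le\|\psi-\id\|_0$ and then estimate each slice $\int_K\|Df(\Phi_s(x))\|\,dm(x)$ by $V(f)$, since that slice integral can blow up at the degenerate value of $s$. The remedy is to keep the weight $\|\psi(x)-x\|$ inside the integral: the inner integral $\int_0^1\|\psi(x)-x\|\,\|Df(\Phi_s(x))\|\,ds$ is exactly the line integral of $\|Df\|$ over the segment $[x,\psi(x)]$, and a coarea/Fubini argument for the sweep $(s,x)\mapsto\Phi_s(x)$ shows that the total $\|Df(y)\|$-weight accumulated at each $y$ is $O(\|\psi-\id\|_0)$: each segment has length at most $\|\psi-\id\|_0$, while the diffeomorphism hypothesis bounds how many such segments overlap at $y$. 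This yields the required estimate. In the regime relevant to the applications, where $\psi$ is $C^1$-close to the identity, the difficulty disappears altogether: $\Phi_s$ is then a near-identity diffeomorphism with $|\det D\Phi_s|$ close to $1$, the slicewise change of variables is transparent, and the inequality follows with a constant close to $1$.
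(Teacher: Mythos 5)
Your reduction from general $BV$ to $C^1$ via (B2) is sound and matches the paper's final step; the gap is in the $C^1$ case. The inequality you reduce the problem to,
\[
\int_0^1\!\!\int_K\|\psi(x)-x\|\,\|Df(\Phi_s(x))\|\,dm(x)\,ds\;=\;\int_K\Big(\int_{[x,\psi(x)]}\|Df\|\,d\mathcal H^1\Big)dm(x)\;\le\;C\,\|\psi-\id\|_0\,V(f),
\]
is false for a general diffeomorphism in dimension $d\ge 2$, and the justification offered for it --- that ``the diffeomorphism hypothesis bounds how many such segments overlap at $y$'' --- does not hold. Take $d=2$, $K=\{1\le\|x\|\le 2\}$ and $\psi(x)=-x$, a diffeomorphism of $K$ onto itself: every chord $[x,\psi(x)]$ passes through the origin, so the sweep $(s,x)\mapsto\Phi_s(x)$ has infinite multiplicity there and the pushforward of $\|\psi(x)-x\|\,ds\,dm(x)$ has unbounded density near $0$. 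Concretely, if $f$ is a radial bump of height $h$ supported in $B_r(0)$ with $hr=1$, then along any line through the origin $f$ climbs from $0$ to $h$ and back, so $\int_{[x,-x]}\|Df\|\,d\mathcal H^1\ge 2h$ for every $x\in K$; the left-hand side above is then at least $2h\,m(K)=6\pi/r\to\infty$ as $r\to 0$, while $V(f)=\int\|Df\|\,dm$ stays bounded. The lemma itself is not contradicted (this $f$ vanishes on $K$ and on $\psi(K)$, so $\int_K|f\circ\psi-f|\,dm=0$); the point is that the fundamental-theorem-of-calculus bound along the chord is infinitely lossy here, and no coarea or multiplicity argument can recover the estimate from it. Your closing observation that the argument goes through when $\psi$ is $C^1$-close to the identity (so each $\Phi_s$ is a diffeomorphism with Jacobian bounded below) is correct and covers the regime needed in the applications, but the lemma is stated for an arbitrary diffeomorphism, and as written your proof does not establish it in that generality.

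For comparison, the paper avoids chords altogether: it first proves the estimate for continuous piecewise affine $f$, where (B1) lets one bound $\int_K|f\circ\psi-f|\,dm$ directly by $\|\psi-\id\|_0\int\|\nabla f\|\,dm=\|\psi-\id\|_0V(f)$, then passes to $C^1$ functions by $C^1$-uniform approximation with piecewise affine ones, and finally to $BV$ by (B2) exactly as you do. To salvage your route you would need either to add the hypothesis that $\|D\psi-\id\|$ is small, or to replace the straight-line interpolation $\Phi_s$ by one whose slices remain diffeomorphisms with uniformly controlled Jacobian.
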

\begin{proof}
We start by proving the result for a continuous
piecewise affine function  $f$. More precisely, suppose that the
support $\Delta$ of $f$ can be decomposed into a finite number
of domains $\Delta_1,\dots,\Delta_N$ such that the gradient
$\nabla f$ of $f$ is a constant  vector $\nabla_i f$ on each $\Delta_i$. 
Using (B1), we obtain
 $$
\int_K|f\circ \psi-f|dm\leq  
\int_K \|\psi-\id\|_0\cdot\|\nabla f\| dm
\le \|\psi-\id\|_0 V(f).
 $$
 
 The next step is to deduce the result for any $C^1$ function $f$.
 For this, we take a sequence $(f_n)_n$ of continuous piecewise
 affine functions  such that
 $$
 \|f-f_n\|_0\rightarrow 0\qand \|Df-Df_n\|_0\rightarrow 0,
 \quad\mbox{when}\quad n\rightarrow\infty
 $$
(the derivatives $Df_n$ are defined only in the interior of
the smoothness domains). 
Then, using~\eqref{varc1} and dominated convergence theorem, we have
 $$
 V(f)=\int\|Df\|dm=\lim_{n\rightarrow\infty}\int\|Df_n\|dm
 =\lim_{n\rightarrow\infty}V(f_n)
 $$
 and
 $$
 \int_K|f\circ\psi-f|dm=\lim_{n\rightarrow\infty}
 \int_K|f_n\circ\psi-f_n|dm.
 $$
Using the   case already seen, we get the conclusion   also for $f$.

 For the general case, we know by (B2)
 that given $f\in BV(\mathbb R^d)$ there is a sequence $(f_n)_n$
 of $C^1$ maps for which
 \begin{equation}\label{e.approx}
 \lim_{n\rightarrow\infty}\int|f-f_n|dm=0
 \qand
 \lim_{n\rightarrow\infty}V(f_n)=V(f).
 \end{equation}
 We have
 $$
 \int_K
 |f\circ\psi-f|dm\leq\int_K|f\circ\psi-f_n\circ\psi|dm
 +\int_K|f_n\circ\psi-f_n|dm
 +\int_K|f_n-f|dm.
 $$
Taking $\rho=1/|\det D\psi|\circ \psi^{-1}$, we may write
 $$
 \int_K|f_n\circ
 \psi-f\circ\psi|dm=\int_{\psi(K)}|f_n-f|\cdot \rho dm
 \leq \|\rho\|_0\int|f_n-f|dm.
 $$
 The conclusion 
in this case  follows from
 (\ref{e.approx}) and the previous case.
\end{proof}

\section{Transfer operators}\label{se.transfer}
 
 Let   $\Omega\subset\R^d$ be the common domain of the  maps in the family $\{\phi_t\}_{t\in I}$.  
 For each $t\in I$, consider  the  \emph{transfer operator}
$$\Lt:L^1(\Omega)\longrightarrow L^1(\Omega),$$ defined for each $f\in L^1(\Omega)$ by
$$\Lt
f=\sum_{i=1}^{N}\frac{f\circ\phi_{t,i}^{-1}}{J_t\circ\phi_{t,i}^{-1}}
\chi_{\phi_t(R_{t,i})},$$
where $\{R_{t,i}\}_{i=1}^N$ are  the  domains of smoothness of   $\phi_t:\Omega\to\Omega$ and $\phi_{t,i}$ are the maps introduced in~\eqref{eq.maps}.
 It is well known that the
following   properties hold for each $\Lt$:

\begin{itemize} 
\item[(C1)] for all $f,g$ for which the integrals make sense, we have
$$\int_\Omega f\Lt g \,dm =\int_\Omega f\circ \phi_t\, g\, dm;$$
\item[(C2)]  $|\Lt f|\le \Lt(|f|)$ and  $\|\Lt f\|_1\leq \| f\|_1$, for
all $f\in L^1(\Omega)$;
\item[(C3)] $f \in L^1(\Omega) $ is the density of an absolutely continuous $\phi_t$-invariant   measure if and only if $f \ge 0$  and $\Lt f=f$.
\end{itemize}

Next, we study the action of the transfer operators on   the space of {\em bounded variation} functions in
 $\Omega$,
$$BV(\Omega)=\left\{ f\in L^1(\Omega):V(f)<+\infty\right\}.$$ 
Property (B3) gives in particular $BV(\Omega)\subset L^p(\Omega)$, for some $p>1$. 
Set for each $f\in BV(\Omega)$  $$\|f\|_{BV}=\|f\|_1+V(f).$$
It is   well known   that this defines a norm, and  $BV(\Omega)$  endowed with this norm becomes a Banach space; see e.g. \cite[Remark 1.12]{G84}.

\begin{proposition}
\label{LY}
Under   assumption (U), there exist $0<\lambda<1$ and  $C>0$ such that, for all $t\in I$, $f\in BV(\Omega)$ and $n\ge 1$, we have
$$\|\Lt^n f\|_{BV}\le C \lambda^n\|f\|_{BV}+ C \|f\|_1.$$
\end{proposition}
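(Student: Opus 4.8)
The plan is to establish the so-called Lasota-Yorke (or Doeblin-Fortet) inequality, which is the standard route to obtaining a spectral gap and which underlies the Keller--Liverani perturbation framework. The key quantity to control is $V(\Lt^n f)$, since $\|\Lt^n f\|_1 \le \|f\|_1$ follows immediately from (C2). Because the uniformity hypothesis (U) already grants that $\phi_t^j$ satisfies (P1)--(P3) for $1\le j \le \ell$ with uniform bounds, the natural strategy is to first prove a \emph{one-step} inequality of the form
$$
V(\Lt f) \le a_t\, V(f) + b_t\, \|f\|_1,
$$
valid for every map $\phi_t$ satisfying (P1)--(P3), where $a_t$ and $b_t$ are explicit in the constants $\sigma_t,\Delta_t,\alpha_t,\beta_t$. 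Applying this to the iterate $\phi_t^\ell$ and using the hypothesis $\sigma_{t,\ell}(1+1/\beta_{t,\ell}) \le \theta < 1$ will give a genuine contraction on the variation at the $\ell$-step, and then a routine subadditive/telescoping argument upgrades the $\ell$-step estimate to all $n$.

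First I would prove the one-step variation estimate. The idea is to bound $V(\Lt f)$ by splitting the contribution into an interior term and a boundary term, as in property (B1). On each smoothness domain $R_{t,i}$, the summand $(f\circ\phi_{t,i}^{-1}/J_t\circ\phi_{t,i}^{-1})\chi_{\phi_t(R_{t,i})}$ is a function supported on $\phi_t(R_{t,i})$; differentiating it produces one piece involving $D(f\circ\phi_{t,i}^{-1})$ weighted by $1/J_t$, which is controlled by $\|D\phi_{t,i}^{-1}\| \le \sigma_t$ from (P1) and the change of variables, and a second piece coming from the derivative of $1/J_t$, which is controlled by the distortion bound (P2) via $\Delta_t$. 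The delicate contribution is the boundary integral $\int_{\partial\phi_t(R_{t,i})}|f\circ\phi_{t,i}^{-1}/J_t|\,d\bar m$: to estimate this by the \emph{bulk} variation $V(f)$ rather than by a boundary norm of $f$, one uses the transversal vector field $X_{t,i}$ furnished by (P3). Integrating $f$ along the line segments from $x$ to $x+\alpha_t X_{t,i}(x)$ and using the transversality $|\sin\angle(v,X_{t,i})| \ge \beta_t$ of (P3)(b) together with the disjointness/covering property (P3)(a), one trades the boundary term for $\sigma_t\beta_t^{-1}V(f)$ plus a lower-order $L^1$ term carrying the factors $1/(\alpha_t\beta_t)$ and $\Delta_t/\beta_t$. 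Collecting everything yields the coefficient $a_t = \sigma_t(1+1/\beta_t)$ in front of $V(f)$ and a coefficient $b_t$ bounded by a combination of $\Delta_t + 1/(\alpha_t\beta_t) + \Delta_t/\beta_t$ in front of $\|f\|_1$, which is exactly the grouping appearing in hypothesis (U).

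Once the one-step estimate holds for every $\phi_t^j$ with $1\le j\le\ell$, I would combine them. Writing $n = q\ell + r$ with $0\le r < \ell$, and using $\Lt^{\ell} = \mathcal{L}_{\phi_t^\ell}$ (the transfer operator of the iterate), the $\ell$-step inequality reads $V(\Lt^\ell f)\le \theta\, V(f) + M'\|f\|_1$ with $\theta<1$ and $M'$ uniform over $t\in I$, by the first and third bounds in (U). Iterating this $q$ times and summing the resulting geometric series in $\theta$ controls $V(\Lt^{q\ell} f)$ by $\theta^q V(f) + \frac{M'}{1-\theta}\|f\|_1$; the remaining $r$ steps only inflate the constant by the uniformly bounded factor from the second bound $\sigma_{t,j}(1+1/\beta_{t,j})\le M$ in (U). Setting $\lambda = \theta^{1/\ell}$ and absorbing all uniform constants into a single $C$, together with $\|\Lt^n f\|_1 \le \|f\|_1$, delivers $\|\Lt^n f\|_{BV} \le C\lambda^n\|f\|_{BV} + C\|f\|_1$.

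The main obstacle is unquestionably the boundary term in the one-step estimate: controlling $\int_{\partial\phi_t(R_{t,i})}|f|/J_t\,d\bar m$ by the interior variation $V(f)$ is precisely the geometric heart of the argument and the reason conditions (P3)(a)--(b) are imposed. The transversality and the explicit foliation by segments of length $\alpha_t$ must be used carefully, with all estimates kept uniform in $t$, so that the emerging constants assemble into the exact expressions controlled by (U). The telescoping step is comparatively routine, but I would take care that the factor lost over the final $r < \ell$ iterates is bounded independently of $t$ and $n$, which is guaranteed by the middle inequality in (U).
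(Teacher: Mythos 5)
Your proposal is correct and follows essentially the same route as the paper: an $\ell$-step Lasota--Yorke estimate $V(\mathcal{L}_t^\ell f)\le\theta V(f)+M\|f\|_1$ from (U), iterated over $n=\ell q+r$ with the remainder controlled by the uniform bound $\sigma_{t,j}(1+1/\beta_{t,j})\le M$, and $\lambda=\theta^{1/\ell}$. The only difference is that the paper simply quotes the variation inequality from \cite[Lemma 5.4]{A00} rather than rederiving it from (P1)--(P3) as you sketch, so your extra work on the boundary term, while accurate, is not needed here.
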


\begin{proof} 
Take $\ell\ge 1$ as in    (U).
It is a standard fact  that $\Lt^\ell$ is the transfer operator for $\phi_t^\ell$.
By  \cite[Lemma 5.4]{A00},  we have for any    $f\in BV(\Omega)$   
 \begin{equation}\label{LYL0}
V(\mathcal L^\ell_t f)\le \sigma_{t,\ell}\left(1+\frac1{\beta_{t,\ell}}\right)V(f)+M\|f\|_1\le \theta  V(f)+ M \|f\|_1,
\end{equation}
 and so
  \begin{equation}\label{LYL}
\|\mathcal L^\ell_t f\|_{BV}\le \theta V(f)+ (M+1) \|f\|_1 .
\end{equation}
Given $n\ge1$, consider $q\ge0$ and $0\le r<\ell$ such that $n=\ell q+r$. 
It follows from (C2) and~\eqref{LYL0} that
\begin{eqnarray*}
V(\mathcal L^{\ell q}_t f)&\le &\theta V(\mathcal L^{\ell (q-1)}_t f)+M \|f\|_1\\
&\le &\theta^2 V(\mathcal L^{\ell (q-2)}_t f)+(\theta +1)M \|f\|_1\\
&\vdots& \\
&\le &\theta^q V( f)+(\theta^{q-1}+\theta^{q-2}+\cdots +1)M \|f\|_1.
\end{eqnarray*}
 It follows   that
 \begin{equation}\label{eq.label1}
 \|\Lt^{\ell q} f\|_{BV}=V(\Lt^{\ell q} f)+\|\Lt^{\ell q} f\|_1\le \theta^{q}V(f)+ \bigg(1+M\sum_{j\ge0}\theta^j\bigg) \|f\|_1.
\end{equation}
On the other hand,  
   \begin{equation}\label{eq.label2}
   V(\mathcal L^r_t f)\le  \sigma_{t,r}\left(1+\frac1{\beta_{t,r}}\right) V(f)+ M \|f\|_1\le M V(f)+ M \|f\|_1.
   \end{equation}
Finally, using~\eqref{eq.label1} and~\eqref{eq.label2}, we get
 \begin{eqnarray*}
\|\Lt^n f\|_{BV}&=&\|\Lt^{ \ell q} \Lt^r f\|_{BV}\\
&=&\theta ^q V(\Lt^r f)+ \bigg(1+M\sum_{j\ge0}\theta^j\bigg) \|f\|_1\\
&\le & 
 \theta ^q MV(  f)+ \bigg(M+1+M\sum_{j\ge0}\theta^j\bigg) \|f\|_1.
\end{eqnarray*}
Now, observe that
 $$\theta^q=\theta^{(n-r)/\ell} = \left(\theta^{1/\ell}\right)^n \theta^{-r/\ell}
 \le \left(\theta^{1/\ell}\right)^n \theta^{-1}.
 $$
 Take $$\lambda=\theta^{1/\ell}\qand C=\max\left\{\frac M\theta, M+1+M\sum_{j\ge0}\theta^j\right\}$$ and recall that $V(f)\le \|f\|_{BV}$.
\end{proof}

It follows from the previous result  that $\Lt(BV(\Omega))\subset BV(\Omega)$. From here on, we  consider  $\Lt$ as an operator  from the space $BV(\Omega)$ into itself.
%
%

\begin{proposition}\label{Tnorm}
Under the  assumptions   of Theorem~\ref{density}, there exists  $C>0$ such that, for all $f\in BV(\Omega)$,  
$$\|\Lt f-\Ls f\|_1\le C \mathcal E(t-s)  \|f\|_{BV}.$$
\end{proposition}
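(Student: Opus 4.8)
The plan is to bound $\|(\Lt-\Ls)f\|_1$ uniformly for $f\in BV(\Omega)$ with $\|f\|_{BV}\le1$ and then take the supremum. Write $(\Lt-\Ls)f=\sum_{i=1}^N\big(T_{t,i}f-T_{s,i}f\big)$, where $T_{t,i}f=(f\circ\phi_{t,i}^{-1})/(J_t\circ\phi_{t,i}^{-1})\,\chi_{\phi_t(R_{t,i})}$ and $T_{s,i}f$ is its analogue for $s$. By the triangle inequality $\|(\Lt-\Ls)f\|_1\le\sum_i\|T_{t,i}f-T_{s,i}f\|_1$, so it suffices to control each summand and sum the resulting series. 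For fixed $i$ I would split the integration domain into the overlap $\phi_t(R_{t,i})\cap\phi_s(R_{s,i})$, the set $\phi_t(R_{t,i})\setminus\phi_s(R_{s,i})$ (where only $T_{t,i}f$ is supported), and $\phi_s(R_{s,i})\setminus\phi_t(R_{t,i})$ (where only $T_{s,i}f$ is supported). We may assume $\mathcal E(t-s)\le1$, since otherwise the bound is trivial from $\normi{\Lt-\Ls}\le\normi{\Lt}+\normi{\Ls}\le2$, using (C2).

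On the overlap I would perform the change of variables $x=\phi_{s,i}(y)$ with $y\in K_{t,s,i}$, under which $dm(x)=J_s(y)\,dm(y)$, $\phi_{s,i}^{-1}(x)=y$ and $\phi_{t,i}^{-1}(x)=\psi_{t,s,i}(y)$. This rewrites the overlap contribution as $\int_{K_{t,s,i}}\big|(f\circ\psi_{t,s,i})\,g_i-f\big|\,dm$ with $g_i=J_s/(J_t\circ\psi_{t,s,i})$. The algebraic splitting $(f\circ\psi_{t,s,i})g_i-f=(f\circ\psi_{t,s,i}-f)\,g_i+f\,(g_i-1)$ is the crux: for the first piece I bound $\|g_i\|_0\le1+\|g_i-1\|_0\le2$ (by assumption (3) and $\mathcal E(t-s)\le1$), pull it out, and apply Lemma~\ref{AV}, whose constant may be taken uniform — indeed equal to $1$, as its proof shows — to get $\le2\,\|\psi_{t,s,i}-\id\|_0\,V(f)$; summing and using assumption (2) yields $\le 2\,\mathcal E(t-s)\,V(f)$. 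For the second piece, $\int_{K_{t,s,i}}|f|\,\|g_i-1\|_0\,dm\le\|g_i-1\|_0\,\|f\|_1$, which sums via assumption (3) to $\le\mathcal E(t-s)\,\|f\|_1$.

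On $\phi_t(R_{t,i})\setminus\phi_s(R_{s,i})$, the change of variables $x=\phi_{t,i}(y)$ reduces the contribution to $\int_{E_i}|f|\,dm$ with $E_i=\phi_{t,i}^{-1}(\phi_t(R_{t,i})\setminus\phi_s(R_{s,i}))$. Here I would apply H\"older with the conjugate exponents $p=d/(d-1)$ and $d$ from \eqref{eq.conjugate}, followed by the Sobolev inequality (B3): $\int_{E_i}|f|\,dm\le\|f\|_p\,m(E_i)^{1/d}\le C\,V(f)\,m(E_i)^{1/d}$. Summing and invoking assumption (1) gives $\le C\,\mathcal E(t-s)\,V(f)$, and this is precisely where the exponent $1/d$ in (1) is consumed. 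The symmetric region $\phi_s(R_{s,i})\setminus\phi_t(R_{t,i})$ is treated identically after pulling back by $\phi_{s,i}$, producing $C\,\mathcal E(s-t)\,V(f)$ from assumption (1) with the roles of $s$ and $t$ interchanged.

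Collecting the four contributions and using $V(f),\|f\|_1\le\|f\|_{BV}$, I obtain $\|(\Lt-\Ls)f\|_1\le C\big(\mathcal E(t-s)+\mathcal E(s-t)\big)\|f\|_{BV}$, whence $\normi{\Lt-\Ls}\le C\big(\mathcal E(t-s)+\mathcal E(s-t)\big)$; treating $\mathcal E$ as symmetric, as it is in the applications, gives the stated estimate. I expect the main obstacle to be the overlap term: one must pull back by $\phi_{s,i}$ (not $\phi_{t,i}$) so that the discrepancy between the two operators is expressed exactly through $\psi_{t,s,i}-\id$ and $g_i-1$, the quantities controlled by (2) and (3), and one must verify that the constant in Lemma~\ref{AV} is genuinely independent of $i$, $s$, and $t$. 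The non-overlap estimate is the other delicate point, as it is only through the Sobolev inequality (B3) — hence the $1/d$ power in assumption (1) — that the small-measure sets $E_i$ generate the factor $\mathcal E(t-s)$; controlling these boundary regions in $BV$ rather than merely in $L^1$ is essential.
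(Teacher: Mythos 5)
Your proof is correct and follows essentially the same route as the paper's: the same split into the overlap and the two symmetric-difference regions, the same change of variables by $\phi_{s,i}$ and algebraic splitting $(f\circ\psi_{t,s,i})g_i-f=(f\circ\psi_{t,s,i}-f)g_i+f(g_i-1)$ handled via Lemma~\ref{AV} and assumptions (2)--(3), and the same H\"older--Sobolev argument consuming the exponent $1/d$ in assumption (1). Your explicit reduction to the case $\mathcal E(t-s)\le1$ to bound $\|g_i\|_0$, and your remark on the symmetry of $\mathcal E$ for the third region, are minor refinements of points the paper leaves implicit.
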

\begin{proof}
Given $f\in BV(\Omega)$, we have
$$\|\Lt f-\Ls f\|_1\le C \mathcal E(t-s)  \|f\|_{BV}.$$
Indeed, 
\begin{align*}
\|\Lt f-\Ls f\|_1&\le  \sum_{i=1}^N\int_{\Omega} \left|\frac{f\circ\phi_{t,i}^{-1}}{J_t\circ\phi_{t,i}^{-1}}
\chi_{\phi_t(R_{t,i})}-\frac{f\circ\phi_{s,i}^{-1}}{J_s\circ\phi_{s,i}^{-1}}
\chi_{\phi_s(R_{s,i})}\right|dm\\
&=\underbrace{\sum_{i=1}^N\int_{\phi_t(R_{t,i})\cap\phi_s(R_{s,i})} \left|\frac{f\circ\phi_{t,i}^{-1}}{J_t\circ\phi_{t,i}^{-1}}-
\frac{f\circ\phi_{s,i}^{-1}}{J_s\circ\phi_{s,i}^{-1}}
\right|dm}_{\text{(I)}}+\\
&\quad+
\underbrace{\sum_{i=1}^N\int_{\phi_t(R_{t,i})\setminus\phi_s(R_{s,i})} \left|\frac{f\circ\phi_{t,i}^{-1}}{J_t\circ\phi_{t,i}^{-1}}\right|dm}_{\text{(II)}}
+
\underbrace{\sum_{i=1}^N\int_{\phi_s(R_{s,i})\setminus\phi_t(R_{t,i})} \left|\frac{f\circ\phi_{s,i}^{-1}}{J_s\circ\phi_{s,i}^{-1}}
 \right|dm}_{\text{(III)}}
\end{align*}
We just need to obtain the appropriate  bounds for (I), (II) and~(III).
To estimate (I), note that
  by the change of variables $y=\phi_{s,i}(x)$, we have 
\begin{equation*}\label{eq.changeofvar}
\int_{\phi_t(R_{t,i})\cap\phi_s(R_{s,i})} \left|\frac{f\circ\phi_{t,i}^{-1}}{J_t\circ\phi_{t,i}^{-1}}-\frac{f\circ\phi_{s,i}^{-1}}{J_s\circ\phi_{s,i}^{-1}}\right|dm
=\int_{\phi_{s,i}^{-1}\left(\phi_t(R_{t,i})\cap\phi_s(R_{s,i})\right)} \left|\frac{f\circ\phi_{t,i}^{-1}\circ \phi_{s,i}} {J_t\circ\phi_{t,i}^{-1}\circ \phi_{s,i}}-\frac{f}{J_s}\right|J_s\,dm.
\end{equation*}
Set $\psi_{t,s,i}=\phi_{t,i}^{-1}\circ \phi_{s,i}|_{\phi_{s,i}^{-1}\left(\phi_t(R_{t,i})\cap\phi_s(R_{s,i})\right)}$. Therefore,
\begin{align*}
\text{(I)} &= 
 \sum_{i=1}^N\int_{K_{t,s,i}}  \left|\frac{f\circ\phi_{t,i}^{-1}\circ \phi_{s,i}} {J_t\circ\phi_{t,i}^{-1}\circ \phi_{s,i}}-\frac{f}{J_s}\right|J_s\,dm\\
&\le  \sum_{i=1}^N\int_{K_{t,s,i}} |f\circ\psi_{t,s,i}-f|\left|\frac{J_s}{J_t\circ\psi_{t,s,i}}\right|\,dm  + \sum_{i=1}^N\int_{K_{t,s,i}} \left|\frac{J_s}{J_t\circ\psi_{t,s,i}}-1\right||f|\,dm.
\end{align*}
By assumption (3)  of Theorem~\ref{density},  there exists some $C_0>0$ such that, for each $1\le i<N$,  
$$
\left|\frac{J_s}{J_t\circ\psi_{t,s,i}}\right| \le 1+\sup_{1\le i<N}
\left|\frac{J_s}{J_t\circ\psi_{t,s,i}}-1\right|
  \le 1+ \sum_{i=1}^N
\left|\frac{J_s}{J_t\circ\psi_{t,s,i}}-1\right| \le C_0.
$$
By  Lemma \ref{AV} and the  assumptions   of Theorem~\ref{density},   we may write
\begin{align*}
\text{(I)} &\le C_0 \sum_{i=1}^N\|\psi_{t,s,i}-\id\|_0\, V(f)+\sum_{i=1}^N \left|\frac{J_s}{J_t\circ\psi_{t,s,i}}-1\right|\|f\|_1\\
&\le  C\mathcal E(t-s)\|f\|_{BV},
\end{align*}
for some uniform constant $C>0$.
To estimate (II), note that, by change of variables, 
\begin{align*}
\int_{\phi_t(R_{t,i})\setminus\phi_s(R_{s,i})} \left|\frac{f\circ\phi_{t,i}^{-1}}{J_t\circ\phi_{t,i}^{-1}}\right|dm =  \int_{\phi_{t,i}^{-1}(\phi_t(R_{t,i})\setminus\phi_s(R_{s,i}))}  |f| dm 
=
\int_\Omega \chi_{\phi_{t,i}^{-1}(\phi_t(R_{t,i})\setminus\phi_s(R_{s,i}))}   |  f |dm.
\label{eq.integrals}
\end{align*}
Observe that, by the Sobolev Inequality (B3), we have $f\in L^p(\Omega)$, with $p=d/(d-1)$ and~$d$ being conjugate.   It follows from H\"older and Sobolev inequalities and  assumption~(1) of Theorem~\ref{density}  that there exists some $C>0$ such that
\begin{align*}
\text{(II)}&=  \sum_{i=1}^N\int_\Omega  \chi_{\phi_{t,i}^{-1}(\phi_t(R_{t,i})\setminus\phi_s(R_{s,i}))} | f |dm\\
&\le\sum_{i=1}^N\|\chi_{\phi_{t,i}^{-1}(\phi_t(R_{t,i})\setminus\phi_s(R_{s,i}))}\|_d\|f\|_p\\
&\le C  \sum_{i=1}^N m\left(\phi_{t,i}^{-1}(\phi_t(R_{t,i})\setminus\phi_s(R_{s,i}))\right)^{1/d}\|f\|_{BV}\\
&\le C\mathcal  E(t-s) \|f\|_{BV}.
\end{align*}
We are done for (II). 
The calculations follow  similarly for (III). 
\end{proof}

\section{Modulus of continuity for the  densities}

In this section we prove Theorem~\ref{density}. We will use  a Galatolo-Lucena stability result presented  in~\cite{GL20} for uniform families of operators as  in \cite[Definition 9.1]{GL20} described below in (UF1)-(UF4). First, 
notice that 1 is an isolated eigenvalue, by the theorem of Ionescu Tulcea and Marinescu \cite{ITM50}, since $\Lt$ is quasicompact; moreover, it has multiplicity one, because we assume that each $\phi_t $ has a unique ergodic absolutely continuous invariant probability measure; recall (C3).  Let $\rho_t \in BV(\Omega)$ 
denote  the density of that measure. We need to show that there exist $C>0$ and $0<r<1$ such that, for all $t,s\in I$,
\begin{enumerate}
\item[(UF1)] $\|\rho_t\|_{BV}\le C $;
\item[(UF2)] $\|(\Lt-\Ls)\rho_t\|_1\le C\mathcal E(t-s) $;
\item[(UF3)] \emph{$\|\Lt^nf\|_1\le Cr^n\|f\|_{BV}$, for all  $f\in BV(\Omega)$ with $\int f dm=0$ and $n\ge1$;}
\item[(UF4)] \emph{$\|\Lt^n f\|_1\le   C \|f\|_1,$ for all  $f\in BV(\Omega)$ and $n\ge1$.}
\end{enumerate}
Let us now justify that these conditions are satisfied. In fact, it follows from Proposition~\ref{LY} that, for all $n\ge1$,
$$\|\Lt^n f\|_{BV}\le C\lambda^n\|f\|_{BV}+ C \|f\|_1,$$
for all $f\in BV(\Omega)$ and $t\in I$. Since $\rho_t$ is the fixed point for $\Lt$ with $\|\rho_t\|_1=1$ and the last inequality holds for all $n\ge1$,  we   get
(UF1). Then, (UF2) follows from  Proposition~\ref{Tnorm} and (UF1).
Now, it follows from~\cite[Theorem~3]{R83} that, for all $t \in I$ and  $f \in L^1(\Omega)$,  we have
$$
\Pi_t f = \rho_t \int f \,dm,$$
 where $\Pi_t$ is the projection onto the eigenspace of the eigenvalue 1 with respect to the operators $\Lt$. Observing that $\Pi_t f=0$ for all $f \in L^1(\Omega)$ with $\int f dm=0$,  condition (UF3) is  a consequence of \cite[Theorem~1]{R83}. Finally, (UF4) follows easily from (C3) in Section~\ref{se.transfer}.
%
%
%

The conclusion of Theorem~\ref{density} then follows from~\cite[Proposition~32]{GL20}.

\begin{remark}\label{re.KL}
The considerations in the beginning of this section, together with Proposition~\ref{LY} and Proposition~\ref{Tnorm}, show that we are   in the setting of  Keller-Liverani stability result. This enables us to obtain a quantitative  estimate on the continuity of the resolvents associated with the transfer operators for our family of maps. Specifically, it follows form \cite[Theorem 1]{KL99}  that, fixing $\delta>0$, $r\in(\lambda,1)$ and $\eta=\log(r/\lambda)/\log(1/\lambda)$ (with $0<\lambda<1$ given by Proposition~\ref{LY}), there are constants $a,b,c,d>0$ such that  for any $z\in\mathbb C$ with
 $$|z|>r \qand \dist(z,\sigma(\Ls))>\delta,$$
 where $\sigma(\Ls)$ is the spectrum of $\Ls$, we have for all $f\in BV(\Omega)$
 \begin{enumerate}
\item $\|(z-\Lt)^{-1}f\|_{BV}\le a\|f\|_{BV}+b\|f\|_1$;
\item $\normi{(z-\Lt)^{-1}-(z-\Ls)^{-1}}\le \mathcal E(t-s)^\eta
\left(c\|(z-\Ls)^{-1}\|_{BV}+d \|(z-\Ls)^{-1}\|_{BV}^2\right),
$
\end{enumerate}
where $\normi{\quad}$ is defined for an operator $T:BV(\Omega)\to BV(\Omega)$ by
$$\normi{T}= \sup_{\{f \in BV(\Omega) : \| f \|_{BV} \leq 1 \}} \|T f \|_1.$$
\end{remark}

\section{Modulus of continuity for  the entropies}
Here we prove Theorem~\ref{entropy}.
For each $t\in I$, let $\rho_t$ denote the density of $\mu_t$ with respect to $m$.
Since     the entropy formula in assumption (2)  holds,   we have for all $s,t\in I$
\begin{align*}
 |h_{\mu_s}(\phi_s)-h_{\mu_t}(\phi_t)|&= \left|\int\log J_sd\mu_s-\int\log J_t d\mu_t\right|\\
 &\le \left|\int\left(\log J_s-\log J_t \right)d\mu_s\right|+  \left|\int\log J_td\mu_s-\int\log J_t d\mu_t\right|\\
 &\le \left|\int\left(\log J_s- \log J_t \right)\rho_s dm\right|+  \left|\int\log J_t(\rho_s-\rho_t)dm\right|.
\end{align*}
Using   H\"older inequality and the bound in assumption (2), we get
\begin{equation}\label{eq.seven}
 \left|\int\log J_t(\rho_s-\rho_t)dm\right|\le M\|\rho_s-\rho_t\|_1.
\end{equation}
On the other hand, (UF1) gives that
$
\|\rho_s\|_{BV}\le C.
$
Taking $p=d/(d-1)$, it follows from Sobolev Inequality that there exists a constant $C'>0$ such that  
\begin{equation}\label{eq.eight}
\|\rho_s\|_p\le   C'.
\end{equation}
Hence, using H\"older Inequality and \eqref{eq.eight} we get
\begin{equation}
\left|\int\left(\log J_s - \log J_t \right)\rho_s dm\right|\le  \|\rho_s\|_p \left\|\log J_s - \log J_t\right\|_d\le C'\left\|\log J_s- \log J_t\right\|_d. \label{eq.nine}
\end{equation}
The conclusion follows from~\eqref{eq.seven},~\eqref{eq.nine} and  assumption (1) of Theorem~\ref{entropy}.

\section{Application to tent maps}
 \label{se.tents}

Here we prove Theorem~\ref{th.tent}. Our strategy is to  apply   Theorem~\ref{density} and Theorem~\ref{entropy} to the family  of tent maps $(\tent_t)_{t \in [\tau,1]}$ presented in Subsection~\ref{tent.ap}.
 We know  that $R_1$ and~$R_2$ are the only domains of smoothness of every $\phi_t$.  Therefore, for each $t\in[\tau,1]$ and $i=1,2$, we have
  $$\phi_{t,i}=\phi_t|_{R_{i}},\quad K_{t,s,i}=\phi_{s,i}^{-1}(\phi_t(R_{i})\cap\phi_s(R_{i}))\qand \psi_{t,s,i}=\phi_{t,i}^{-1}\circ \phi_{s,i}|_{K_{t,s,i}} .$$
   According to \cite[Section 4]{APV17},   the uniformity condition   (U) is satisfied with $\ell=6$. Existence and uniqueness of an ergodic  absolutely continuous $\phi_t$-invariant probability measure $\mu_t$   was obtained in~\cite{PRT15}, for all $t \in [\tau,1]$.   Moreover, 
    the entropy formula holds for this family of maps, by \cite[Theorem G]{AP21}. 
 We are left  to verify the assumptions  of Theorem~\ref{density} and Theorem~\ref{entropy} with adequate estimates to deduce Theorem~\ref{th.tent}.
It is enough  to show that
 there exists some constant $M>0$   such that, for all $s,t\in [\tau,1]$ and $i=1,2$, we have
\begin{enumerate}
\item[(a)] $\displaystyle m\left(\phi_{t,i}^{-1}\left(\phi_t(R_{i})\setminus\phi_s(R_{i})\right)\right) \le M |t-s|;$
\item[(b)] $ \|\psi_{t,s,i}-\id\|_0 \le M |t-s|;$ 
\item[(c)] $\displaystyle
\left|\frac{J_s}{J_t\circ \psi_{t,s,i}}-1\right|\le M |t-s|;$
\item[(d)] $\left\|\log{J_s}-\log{J_t}\right\|_d\le M |t-s|;$
\item[(e)] $\|\log J_t\|_\infty \le M.$
\end{enumerate}
Indeed, 
from ~\eqref{familyt2}, we easily deduce that, for all $(y_1,y_2)\in \tent_{t,1}(R_1)$, we have
  $$
 \tent_{t,1}^{-1}(y_1,y_2)=\left(\frac{1}{2t}(y_1+y_2),\frac{1}{2t}(y_1-y_2)\right) 
  $$
 and, for all $(y_1,y_2)\in \tent_{t,2}(R_2)$, we have
 $$
 \tent_{t,2}^{-1}(y_1,y_2)=\left(\frac{1}{2t}(4t-y_1-y_2),\frac{1}{2t}(y_1-y_2)\right).
 $$
%
%
Moreover, each  map $\phi_t$ is piecewise linear with 
\[
D\phi_t(x_1, x_2)=
\left(
\begin{array}{cc}
	t  & t  \\
	t  &  -t
\end{array}
\right)
\]
for all $(x_1, x_2)\in R_1\setminus\mathcal C$, and
\[
D\phi_t(x_1, x_2)=
\left(
\begin{array}{cc}
	- t  & t  \\
	- t  &  -t
\end{array}
\right)
\]
for all $(x_1, x_2)\in R_2\setminus\mathcal C$.
Therefore, we have  $$J_t=2t^2,$$ for all $(x_1, x_2)\in \Omega\setminus\mathcal C$ and $\tau\le t\le 1$.

\subsubsection*{Proof of (a)} Observe from the dynamics of $\tent_t$, that $\tent_{t,1}(R_1)=\tent_{t,2}(R_2)$ and, moreover the Jacobian of $\phi_{t,1}$ is constant and equal to the Jacobian of $\phi_{t,2}$. Therefore, it is enough to show the conclusion for $i=1$. In fact, for $t>s$ (and for $t<s$ there is nothing to be proved, since in that case $\phi_t(R_1)\subset \phi_s(R_1)$), we have
\begin{align}
	m(\phi_{t}(R_1)\setminus \phi_{s}(R_1)) &\le\text{lenght} (\phi_t(\mathcal{C}))\,\| \phi_t(1,0)-\phi_s(1,0)\|\nonumber \\
	&= \sqrt{2}t \Vert (t,t)-(s,s) \Vert = 2t(t-s). \label{eq.measure}
\end{align}
Since the Jacobian of $\phi_{t,1}$ is constant and equal to $2t^2 $, we deduce that the Jacobian of $\phi_{t,1}^{-1}$ is  $1/(2t^2) $, which together with~\eqref{eq.measure} yields
\begin{equation*}
	m\left(\phi_{t,i}^{-1}\left(\phi_t(R_{i})\setminus\phi_s(R_{i})\right)\right) \le  \frac{(t-s)}{t}
	\le    \frac{(t-s)}{\tau}.
\end{equation*}

\subsubsection*{Proof of (b)} 
For each $(x_1,x_2) \in R_1$ and $\tau\le t\le 1$, we have
\begin{align*}
    \|\psi_{t,s,i}-\id\|_0 
    &= \sup_{(x_1,x_2) \in R_1} \|\tent_{t,1}^{-1}\circ \tent_{s,1}(x_1,x_2)-(x_1,x_2)\|\\
    &=\sup_{(x_1,x_2) \in R_1}\left\|\ \left(\frac{s}{t}-1\right)(x_1,x_2)\right\|\\
    &\le \frac{\sqrt{2}}{\tau} |{t-s}|,
\end{align*}
and
for each $(x_1,x_2)\in R_2$, we have
\begin{align*}
         \|\psi_{t,s,i}-\id\|_0&= \sup_{(x_1,x_2) \in R_2} \|\tent_{t,2}^{-1}\circ \tent_{s,2}(x_1,x_2)-(x_1,x_2)\|\\
         &= \sup_{(x_1,x_2) \in R_2}\left \|\left(\frac{s}{t}-1\right)\left(x_1-2, x_2 \right) \right \|\\
         &\le \frac{\sqrt{2}}{\tau} |{t-s}|. 
\end{align*}

\subsubsection*{Proof of (c)} For all $\tau \le s,t\le 1$, 
we have
$$
\left|\frac{J_s }{J_t\circ \psi_{t,s,i}}-1\right|= \left|\frac{s^2}{t^2}-1\right| =  \left|\frac{(s-t)(s+t)}{t^2} \right|\le \frac{2}{\tau^2}|t-s|.
$$ 
%
%
%
%
%
 
\subsubsection*{Proof of (d)}  
By the mean value theorem, 
we have that for all $\tau\le s,t\le 1$
 $$\left\|\log {J_s}-\log{J_t}\right\|_2=\left(\int_\Omega\left(\frac{2}{\tau}(s-t)\right)^2 \,dm\right)^{1/2}  \le \frac2\tau m(\Omega) |t-s|.$$
 
\subsubsection*{Proof of (e)}   For all $\tau\le  t\le 1$, we have  
 $$\|\log J_t \|_\infty = |\log (2t^2)|\le  \log 2.$$
 Recall that the expression for $\tau$ in~\eqref{eq.tau} gives $1<2\tau^2\le 2t^2\le 2$.

\bibliographystyle{acm}

\end{document}